\hfill \footnotesize {\rm H. Dehghan and J. Rooin} \hfill
\hfill \footnotesize {\rm A Characterization of Metric Projection in CAT(0) Spaces}  \hfill$~$}
\begin{document}

\setcounter{page}{1}

\begin{center}
{\large\bf A Characterization of Metric Projection in CAT(0) Spaces}

\vskip.25in

{\bf Hossein Dehghan } \\
{\footnotesize \textit{Department of Mathematics, Institute for
Advanced Studies in Basic Sciences (IASBS), Gava Zang, Zanjan 45137-66731, Iran}\\[-1mm]
\textit{E-mail:} {\tt hossein.dehgan@gmail.com}}\\[2mm]

{\bf Jamal Rooin} \\
{\footnotesize \textit{Department of Mathematics, Institute for Advanced Studies in Basic Sciences (IASBS), Gava Zang, Zanjan 45137-66731, Iran}\\[-1mm]
\textit{E-mail:} {\tt rooin@iasbs.ac.ir}}\\[2mm]

\end{center}

\vskip 5mm

\noindent{\footnotesize{\bf Abstract.}
In this paper, we present a characterization of metric projection in CAT(0) spaces by using the concept of quasilinearization. Furthermore, some basic properties of matric projection are investigated.  \\

{\it Mathematics Subject Classification.} 54E40, 47H05.\\

{\it Key words and phrases:}  CAT(0) space, Metric projection, Quasilinearization.

   \newtheorem{df}{Definition}[section]
   \newtheorem{rk}[df]{Remark}
   \newtheorem{lem}[df]{Lemma}
   \newtheorem{thm}[df]{Theorem}
   \newtheorem{pro}[df]{Proposition}
   \newtheorem{cor}[df]{Corollary}
   \newtheorem{ex}[df]{Example}

 \setcounter{section}{0}
 \numberwithin{equation}{section}

\vskip .2in

%
%
\section{Introduction}

\par
A metric space $(X,d)$ is a CAT(0) space if it is geodesically connected and if every geodesic triangle in $X$ is at least as thin as its
comparison triangle in the Euclidean plane. For other equivalent definitions and basic properties, we refer the reader to standard texts such as \cite{Bridson}.
Complete CAT(0) spaces are often called Hadamard spaces.
Let $x,y\in X$. We write $\lambda x\oplus (1-\lambda) y$ for the the unique point $z$ in the geodesic segment joining from $x$ to $y$ such that
\begin{eqnarray*}\label{oplus}
d(z, x)= (1-\lambda)d(x, y)\ \ \ \mbox{and}\ \ \  d(z, y)=\lambda d(x, y).
\end{eqnarray*}
We also denote by $[x, y]$ the geodesic segment joining from $x$ to $y$, that is, $[x, y]=\{\lambda x\oplus (1-\lambda) y : \lambda\in [0, 1]\}$. A subset $C$ of a CAT(0) space is convex if $[x, y]\subseteq C$ for all $x, y\in C$.
\par
Berg and Nikolaev in \cite{Berg1} have introduced the concept of \emph{quasilinearization}. Let us formally denote a pair $(a,b)\in X\times X$ by $\overrightarrow{ab}$ and call it a vector. Then quasilinearization is the map $\langle \cdot, \cdot \rangle : (X\times X)\times(X\times X)\to \mathbb{R}$ defined by
\begin{eqnarray}\label{qasilin}
 \langle\overrightarrow{ab},\overrightarrow{cd}\rangle = \frac{1}{2}\left(d^2(a,d)+d^2(b,c)-d^2(a,c)-d^2(b,d)\right), \ \ \ \ (a,b,c,d\in X).
\end{eqnarray}
We say that $X$ satisfies the Cauchy-Schwarz inequality if
\begin{eqnarray}\label{Cha-Sch}
\langle\overrightarrow{ab},\overrightarrow{cd}\rangle\leq d(a,b) d(c,d)
\end{eqnarray}
for all $a,b,c,d\in X$. It known \cite[Corollary 3]{Berg1} that a geodesically connected metric space is CAT(0) space if and only if it satisfies the Cauchy-Schwarz inequality.
\par
We need the following lemma in the sequel.

\begin{lem}\cite[Lemma 2.5]{Dhompongsa}
A geodesic space $X$ is a CAT(0) space if and only if the following inequality
\begin{eqnarray}\label{para ine}
 d^2(\lambda x\oplus (1-\lambda) y, z)\leq \lambda d^2(x,z)+(1-\lambda)d^2(y,z)-\lambda(1-\lambda)d^2(x,y)
\end{eqnarray}
is satisfied for all $x,y,z\in X$ and $\lambda\in [0,1]$.
\end{lem}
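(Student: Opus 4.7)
The plan is to exploit the fact that the right-hand side of \eqref{para ine} is, up to reinterpretation, the squared Euclidean norm in a comparison triangle. For any $\bar x,\bar y,\bar z\in\mathbb{R}^2$ one has the algebraic identity
\begin{equation*}
\|\lambda\bar x+(1-\lambda)\bar y-\bar z\|^2 = \lambda\|\bar x-\bar z\|^2+(1-\lambda)\|\bar y-\bar z\|^2-\lambda(1-\lambda)\|\bar x-\bar y\|^2,
\end{equation*}
obtained by expanding the inner product. Thus \eqref{para ine} is equivalent to the assertion $d(\lambda x\oplus(1-\lambda)y,z)\le\|\bar m-\bar z\|$, where $\bar m:=\lambda\bar x+(1-\lambda)\bar y$ is the candidate comparison point of $m:=\lambda x\oplus(1-\lambda)y$ on the side $[\bar x,\bar y]$ of the Euclidean comparison triangle $\triangle(\bar x,\bar y,\bar z)$.

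For the direct implication (CAT(0) $\Rightarrow$ \eqref{para ine}), I would form the comparison triangle $\triangle(\bar x,\bar y,\bar z)\subset\mathbb{R}^2$ for $\{x,y,z\}$. The defining equalities $d(m,x)=(1-\lambda)d(x,y)$ and $d(m,y)=\lambda d(x,y)$ match the corresponding Euclidean distances from $\bar x,\bar y$ to $\bar m$, so $\bar m$ is a legitimate comparison point for $m$ on $[\bar x,\bar y]$. The CAT(0) condition then yields $d(m,z)\le\|\bar m-\bar z\|$, and the Euclidean identity above converts this bound into \eqref{para ine}.

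For the converse, the inequality \eqref{para ine} already delivers the CAT(0)-type comparison $d(z,m)\le\|\bar z-\bar m\|$ for any $m\in[x,y]$, as $\lambda$ ranges over $[0,1]$, i.e.\ the ``vertex-to-opposite-side'' comparison at the vertex $z$. Since \eqref{para ine} is symmetric in the choice of which of the three points plays the role of $z$, the same holds at every vertex of every geodesic triangle. To upgrade this to the full CAT(0) condition for arbitrary pairs of points on a triangle, I would invoke the standard reduction (e.g.\ Bridson--Haefliger, Proposition II.1.7): given a point $p$ on one side and a point $q$ on another, one applies the vertex-to-side comparison first in the auxiliary triangle formed by $p$ and the endpoints of the side containing $q$, and then once more in the original triangle, combining the two bounds to obtain the four-point comparison.

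The main obstacle is precisely this last step of the converse: lifting the three-point inequality \eqref{para ine} to the genuine four-point CAT(0) condition. The forward direction is a bookkeeping exercise in $\mathbb{R}^2$, whereas the converse relies on the nontrivial (but classical) fact that vertex-to-side comparisons at every vertex already encode the full CAT(0) property in a geodesic space, and this is where the completeness of the geodesic structure is genuinely used.
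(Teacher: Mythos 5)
The paper offers no proof of this lemma at all: it is quoted verbatim from the cited reference \cite[Lemma 2.5]{Dhompongsa}, so there is no in-paper argument to compare against. Judged on its own, your proposal is correct and is essentially the standard textbook derivation. The forward direction is exactly right: the Euclidean identity $\|\lambda\bar x+(1-\lambda)\bar y-\bar z\|^2=\lambda\|\bar x-\bar z\|^2+(1-\lambda)\|\bar y-\bar z\|^2-\lambda(1-\lambda)\|\bar x-\bar y\|^2$ identifies the right-hand side of \eqref{para ine} with the squared distance from $\bar z$ to the comparison point of $\lambda x\oplus(1-\lambda)y$, and the CAT(0) comparison inequality does the rest. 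Your converse correctly identifies the real content, namely the classical reduction (Bridson--Haefliger, Prop.\ II.1.7) from the vertex-to-opposite-side comparison at each vertex to the full two-points-on-two-sides comparison; invoking that result is legitimate. Two small remarks. First, \eqref{para ine} a priori controls only points on the distinguished geodesic singled out by the $\oplus$ notation, whereas the vertex-to-side criterion must hold for every geodesic triangle; this is harmless because \eqref{para ine} with $z$ taken to be a second midpoint of $x$ and $y$ forces $d^2(m,z)\le 0$, so midpoints (hence geodesics between nearby points) are unique, but it deserves a sentence. Second, your closing claim that completeness of the geodesic structure is ``genuinely used'' in the last step is not accurate: the reduction in Prop.\ II.1.7 needs only that $X$ is geodesic, and indeed the lemma as stated assumes no completeness.
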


\section{Main results}
Let $C$ be a nonempty complete convex subset of a CAT(0) space $X$. It is known \cite[Proposition 2.4]{Bridson} that for any $x\in X$ there exists a unique point
$x_0\in C$ such that
\begin{eqnarray}
\nonumber d(x,x_0)= \min_{ y\in C} d(x, y).
\end{eqnarray}
The mapping $P_C : X \rightarrow C$ defined by $P_Cx= x_0$ is called the \emph{metric projection} from $X$ onto $C$.
 \par
 We need the following useful lemma to prove our main result.

\begin{lem}\label{coffi quasi lem} (For a general case see \cite[Lemma 4.1.1]{Dehghan})
Let $X$ be a CAT(0) space, $x,y\in X$, $\lambda\in [0,1]$ and $z= \lambda x\oplus (1-\lambda)y$. Then,
\begin{eqnarray}\label{coffi quasi}
\langle\overrightarrow{zy},\overrightarrow{zw}\rangle \leq \lambda \langle\overrightarrow{xy},\overrightarrow{zw}\rangle
\end{eqnarray}
for all $w\in X$.
\end{lem}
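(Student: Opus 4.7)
The plan is to reduce the inequality to the CAT(0) characterization (\ref{para ine}) by pure algebra. First I would expand both sides of (\ref{coffi quasi}) using the definition of quasilinearization. The left side becomes
\[
\langle\overrightarrow{zy},\overrightarrow{zw}\rangle = \tfrac{1}{2}\bigl(d^2(z,w)+d^2(y,z)-d^2(y,w)\bigr),
\]
while the right side is
\[
\lambda\langle\overrightarrow{xy},\overrightarrow{zw}\rangle = \tfrac{\lambda}{2}\bigl(d^2(x,w)+d^2(y,z)-d^2(x,z)-d^2(y,w)\bigr).
\]
The only term that is not obviously comparable on the two sides is $d^{2}(z,w)$ on the left, and the natural tool to control it is exactly the CAT(0) inequality (\ref{para ine}).

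Next I would use the defining property of the geodesic point $z=\lambda x\oplus(1-\lambda)y$, namely $d(z,x)=(1-\lambda)d(x,y)$ and $d(z,y)=\lambda d(x,y)$, to substitute $d^2(x,z)=(1-\lambda)^2 d^2(x,y)$ and $d^2(y,z)=\lambda^2 d^2(x,y)$ into the right-hand side. This turns the right-hand side into
\[
\tfrac{\lambda}{2}\bigl(d^2(x,w)-d^2(y,w)+(2\lambda-1)d^2(x,y)\bigr),
\]
since $\lambda^{2}-(1-\lambda)^{2}=2\lambda-1$.

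Then I would apply (\ref{para ine}) with $z$ replaced by $w$ on the left-hand side to get
\[
d^2(z,w)\le \lambda d^2(x,w)+(1-\lambda)d^2(y,w)-\lambda(1-\lambda)d^2(x,y),
\]
and substitute this bound into the expression for $\langle\overrightarrow{zy},\overrightarrow{zw}\rangle$. After simplification using $\lambda^{2}-\lambda(1-\lambda)=2\lambda^{2}-\lambda=\lambda(2\lambda-1)$, the left-hand side becomes precisely the right-hand side computed above.

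I do not expect a serious obstacle here: the argument is a direct expand-and-compare, with (\ref{para ine}) providing the one non-trivial estimate. The only place where one must be careful is keeping track of signs and the coefficients of $d^{2}(x,y)$, so the main task is to verify cleanly that the two quadratic combinations of $\lambda$ agree, which they do because both equal $\lambda(2\lambda-1)$.
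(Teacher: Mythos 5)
Your proposal is correct and follows essentially the same route as the paper: expand both sides via the quasilinearization formula, bound $d^2(z,w)$ with the CAT(0) inequality (\ref{para ine}) applied at the point $w$, and use $d^2(x,z)=(1-\lambda)^2d^2(x,y)$, $d^2(y,z)=\lambda^2d^2(x,y)$ to see that the resulting quadratic coefficients of $d^2(x,y)$ agree. The paper merely organizes the same computation as showing $2(\langle\overrightarrow{zy},\overrightarrow{zw}\rangle-\lambda\langle\overrightarrow{xy},\overrightarrow{zw}\rangle)\leq 0$ rather than comparing the two sides separately.
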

\begin{proof}
 Using (\ref{oplus}) and (\ref{para ine}), we have
\begin{eqnarray}
\nonumber 2(\langle\overrightarrow{zy},\overrightarrow{zw}\rangle-\lambda \langle\overrightarrow{xy},\overrightarrow{zw}\rangle)&=& d^2(z,w) + d^2(y,z) - d^2(y,w)\\
\nonumber && -\lambda ( d^2(x,w) + d^2(y,z) - d^2(x,z) - d^2(y,w)  )\\
\nonumber&\leq& \lambda d^2(x,w) + (1-\lambda) d^2(y,w) -\lambda(1- \lambda) d^2(x,y)+  d^2(y,z) \\
\nonumber && - d^2(y,w) -\lambda ( d^2(x,w) + d^2(y,z) - d^2(x,z) - d^2(y,w)  )\\
\nonumber &=& (1-\lambda) d^2(y,z) + \lambda d^2(x,z) -\lambda(1- \lambda) d^2(x,y)\\
\nonumber &=& \lambda^2 (1-\lambda) d^2(y,x) + \lambda (1-\lambda)^2 d^2(x,y) -\lambda(1- \lambda) d^2(x,y)\\
\nonumber &=& 0,
\end{eqnarray}
which is the desired inequality.
\end{proof}


\begin{thm}\label{proj}
Let $C$ be a nonempty convex subset of a CAT(0) space $X$, $x\in X$ and $u\in C$. Then  $u=P_Cx$ if and only if
\begin{eqnarray}\label{charac}
\langle\overrightarrow{xu},\overrightarrow{uy}\rangle\geq0
\end{eqnarray}
 for all $y\in C$.
\end{thm}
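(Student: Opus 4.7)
\medskip

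\noindent\textbf{Proof proposal.} The statement is a nonlinear analogue of the classical variational characterization of projection in Hilbert space, where $\langle \overrightarrow{ab},\overrightarrow{cd}\rangle$ stands in for $\langle b-a,d-c\rangle$. So my plan is to mimic the Hilbert-space proof, using Lemma~\ref{coffi quasi lem} as the substitute for bilinearity when the geodesic variable is moved.

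For the sufficiency direction, I would simply expand using definition~(\ref{qasilin}) with $a=x$, $b=u$, $c=u$, $d=y$. Since $d(u,u)=0$, the hypothesis $\langle\overrightarrow{xu},\overrightarrow{uy}\rangle\ge 0$ becomes
\[
d^2(x,y)\;\ge\; d^2(x,u)+d^2(u,y)\;\ge\; d^2(x,u),
\]
so $d(x,u)\le d(x,y)$ for every $y\in C$, i.e.\ $u=P_Cx$. No geometry beyond the definition is needed here.

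For the necessity direction, fix $y\in C$ and exploit convexity: for each $\lambda\in(0,1]$ put
\[
z_\lambda \;=\; \lambda y\oplus (1-\lambda)u \;\in\; C,
\]
so that $d(z_\lambda,u)=\lambda d(y,u)$ and $d(x,z_\lambda)\ge d(x,u)$ because $u=P_Cx$. Applying Lemma~\ref{coffi quasi lem} with the roles $x\mapsto y$, $y\mapsto u$, $w\mapsto x$ gives
\[
\langle\overrightarrow{z_\lambda u},\overrightarrow{z_\lambda x}\rangle \;\le\; \lambda\,\langle\overrightarrow{yu},\overrightarrow{z_\lambda x}\rangle.
\]
Expanding the left-hand side via~(\ref{qasilin}) yields $\tfrac12\bigl[d^2(z_\lambda,x)-d^2(u,x)+\lambda^2 d^2(y,u)\bigr]$, which is nonnegative by the minimizing property of $u$. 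Hence $\langle\overrightarrow{yu},\overrightarrow{z_\lambda x}\rangle\ge 0$ for $\lambda\in(0,1]$.

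The final step is the key algebraic check: expanding $\langle\overrightarrow{yu},\overrightarrow{z_\lambda x}\rangle$ using $d(z_\lambda,u)=\lambda d(y,u)$ and $d(z_\lambda,y)=(1-\lambda)d(y,u)$, the $d^2(y,u)$-terms combine into $(2\lambda-1)d^2(y,u)/2$, and one reads off the identity
\[
\langle\overrightarrow{yu},\overrightarrow{z_\lambda x}\rangle \;=\; \langle\overrightarrow{xu},\overrightarrow{uy}\rangle + \lambda\,d^2(y,u).
\]
Thus $\langle\overrightarrow{xu},\overrightarrow{uy}\rangle\ge -\lambda d^2(y,u)$ for every $\lambda\in(0,1]$, and letting $\lambda\to 0^+$ gives the required inequality. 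The only subtle point is picking the right interpolation $z_\lambda$ (based at $u$ and pointing toward $y$) and tracking coefficients carefully so that Lemma~\ref{coffi quasi lem} applies and the $\lambda\to 0^+$ limit isolates $\langle\overrightarrow{xu},\overrightarrow{uy}\rangle$; once that set-up is made, the remainder is bookkeeping with~(\ref{qasilin}).
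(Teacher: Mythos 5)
Your proof is correct, and the necessity half is essentially the paper's own argument: the same auxiliary point $z_\lambda=\lambda y\oplus(1-\lambda)u$, the same application of Lemma~\ref{coffi quasi lem} with $w=x$, and the same conclusion $\langle\overrightarrow{yu},\overrightarrow{z_\lambda x}\rangle\ge 0$ (which, by the symmetry $\langle\overrightarrow{ab},\overrightarrow{cd}\rangle=\langle\overrightarrow{ba},\overrightarrow{dc}\rangle=\langle\overrightarrow{cd},\overrightarrow{ab}\rangle$, is exactly the paper's $\langle\overrightarrow{xz},\overrightarrow{uy}\rangle\ge 0$). You finish by the exact identity $\langle\overrightarrow{yu},\overrightarrow{z_\lambda x}\rangle=\langle\overrightarrow{xu},\overrightarrow{uy}\rangle+\lambda d^2(y,u)$, which checks out and lets you take $\lambda\to 0^+$ as a limit of real numbers; the paper instead appeals to continuity of $d(\cdot,x)$ to pass from $\langle\overrightarrow{xz},\overrightarrow{uy}\rangle$ to $\langle\overrightarrow{xu},\overrightarrow{uy}\rangle$. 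Where you genuinely diverge is the sufficiency half: the paper writes $d^2(x,u)=\langle\overrightarrow{xu},\overrightarrow{xu}\rangle\le\langle\overrightarrow{xu},\overrightarrow{xy}\rangle$ and then invokes the Cauchy--Schwarz inequality (\ref{Cha-Sch}) to divide by $d(x,u)$, which forces the case split $d(x,u)=0$. Your version simply expands (\ref{qasilin}) to read the hypothesis as $d^2(x,y)\ge d^2(x,u)+d^2(u,y)\ge d^2(x,u)$, which is shorter, needs no case distinction, and uses no CAT(0) geometry at all for that direction --- it is valid in any metric space. Both routes are fine; yours buys a cleaner sufficiency proof, while the paper's Cauchy--Schwarz phrasing emphasizes the analogy with the Hilbert-space argument.
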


\begin{proof}
Let $\langle\overrightarrow{xu},\overrightarrow{uy}\rangle\geq0$  for all $y\in C$. If $d(x,u)=0$, then the assertion is clear. Otherwise, we have
\begin{eqnarray}
\nonumber
\langle\overrightarrow{xu},\overrightarrow{xy}\rangle -\langle\overrightarrow{xu},\overrightarrow{xu}\rangle= \langle\overrightarrow{xu},\overrightarrow{uy}\rangle\geq0.
\end{eqnarray}
This together with Cauchy-Schwarz inequality implies that
\begin{eqnarray}
\nonumber d^2(x,u)=\langle\overrightarrow{xu},\overrightarrow{xu}\rangle \leq \langle\overrightarrow{xu},\overrightarrow{xy}\rangle
\leq d(x,u) d(x,y) .
\end{eqnarray}
That is, $d(x,u)\leq d(x,y)$ for all $y\in C$ and so $u=P_Cx$.
\par
Conversely, let $u=P_Cx$. Since $C$ is convex, then $z=\lambda y\oplus(1-\lambda)u\in C$ for all $y\in C$ and $\lambda\in(0,1)$. Thus, $d(x,u)\leq d(x,z)$. Using (\ref{qasilin}) we have
\begin{eqnarray}\label{char ineq1}
 \langle\overrightarrow{xz},\overrightarrow{uz}\rangle \geq \frac{1}{2}d^2(x,z)-\frac{1}{2}d^2(x,u)\geq 0.
\end{eqnarray}
On the other hand, by using Lemma \ref{coffi quasi lem}, we have $\langle\overrightarrow{xz},\overrightarrow{uz}\rangle\leq \lambda\langle\overrightarrow{xz},\overrightarrow{uy}\rangle$. This together with (\ref{char ineq1}) implies that
\begin{eqnarray}
\nonumber \langle\overrightarrow{xz},\overrightarrow{uy}\rangle \geq 0.
\end{eqnarray}
Since the function $d(\cdot , x): X \to \mathbb{R}$ is continuous for all $x\in X$, letting $\lambda\to 0^+$, we have $\langle\overrightarrow{xu},\overrightarrow{uy}\rangle\geq0$. This completes the proof.
\end{proof}

\begin{thm}
Let $C$ be a nonempty subset of a CAT(0) space $X$ and $x\in X$. Then $P_Cx\subset \partial C$, where 
$ P_Cx=\{z\in C: d(x,z)= \inf_{ y\in C} d(x, y)\}$ and 
$\partial C$ is the boundary of $C$.
\end{thm}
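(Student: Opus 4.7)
The plan is a short contradiction argument resting on the defining property of the CAT(0) geodesic combination $\lambda x\oplus(1-\lambda)z$. Fix $z\in P_Cx$ and suppose for contradiction that $z\notin\partial C$. Since $z\in C$, this forces $z\in\mathrm{int}(C)$, so there exists $r>0$ with $B(z,r)\subset C$.

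The goal is then to exhibit a point $w\in C$ with $d(x,w)<d(x,z)$, which contradicts $z\in P_Cx$ and hence establishes $z\in\partial C$. Assuming $x\neq z$, I would set $w=\lambda x\oplus(1-\lambda)z$ for a sufficiently small $\lambda\in(0,1)$. The defining identities for $\oplus$ give $d(w,z)=\lambda d(x,z)$ and $d(w,x)=(1-\lambda)d(x,z)$. Choosing $\lambda$ small enough that $\lambda d(x,z)<r$ places $w$ inside $B(z,r)\subset C$, while the second identity yields $d(x,w)=(1-\lambda)d(x,z)<d(x,z)$, the desired contradiction. Hence $z\in C\setminus\mathrm{int}(C)\subset\overline{C}\setminus\mathrm{int}(C)=\partial C$.

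The one genuinely subtle point is the degenerate case $z=x\in C$, where the nudging construction is vacuous because $d(x,z)=0$. There one needs $x\in\partial C$ directly, which tacitly requires the hypothesis $x\notin\mathrm{int}(C)$; without it the statement $P_Cx\subset\partial C$ is literally false, since $x\in\mathrm{int}(C)$ gives $P_Cx=\{x\}\not\subset\partial C$. I would either impose $x\notin\mathrm{int}(C)$ explicitly, or read the theorem as concerning the nontrivial case $x\notin C$. Beyond this bookkeeping, the proof uses only the unique geodesic structure of $X$; it does not invoke quasilinearization, the Cauchy--Schwarz inequality \eqref{Cha-Sch}, or Lemma \ref{coffi quasi lem}, and the convexity of $C$ is also irrelevant (consistent with $C$ only being assumed nonempty in the hypothesis).
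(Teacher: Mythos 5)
Your proof is correct and essentially identical to the paper's: the paper likewise nudges the minimizer toward $x$ along the geodesic, taking $z_N=\frac{1}{N}x\oplus(1-\frac{1}{N})u$ and using $d(z_N,u)=\frac{1}{N}d(x,u)<\varepsilon$ together with $d(z_N,x)=(1-\frac{1}{N})d(x,u)<d(x,u)$ to contradict minimality. The degenerate case you flag is a genuine defect that the paper's own proof silently overlooks: when $x\in\mathrm{int}(C)$ one has $d(x,u)=0$, the strict inequality $(1-\frac{1}{N})d(x,u)<d(x,u)$ fails, and indeed $C=X$ gives $P_Cx=\{x\}\not\subset\partial C=\emptyset$, so some hypothesis such as $x\notin\mathrm{int}(C)$ is needed for the statement to be literally true.
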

\begin{proof}
Let $u\in P_Cx$ and $u\not\in \partial C$. Then there exists an $\varepsilon> 0$ such that $B(u,\varepsilon)\subset C$, where $B(u,\varepsilon)$ denotes the open ball with center $u$ and radius $\varepsilon$. For each $n\geq 1$, let $z_n=1/n x\oplus (1-1/n)u$. We know that
\begin{eqnarray}
\nonumber d(z_n,u)= \frac{1}{n}d(x,u).
\end{eqnarray}
Hence, For sufficiently large $N\geq 1$, $d(z_N,u)<\varepsilon$. Thus $z_N\in B(u,\varepsilon)\subset C$. On the other hand,
\begin{eqnarray}
\nonumber d(z_N,x)=\left(1- \frac{1}{N}\right)d(x,u)<d(x,u)= d(x,C),
\end{eqnarray}
which contradicts the fact that $u\in P_Cx$. Therefore, $u\in \partial C$.
\end{proof}

\par
A self-mapping $T$ of $C\subseteq X$ is said to be
\begin{itemize}
  \item[(i)] \emph{nonexpansive} if $d( Tx, Ty)\leq d(x,y)$,
  \item[(ii)] \emph{firmly nonexpansive} if $\langle\overrightarrow{xy},\overrightarrow{Tx Ty}\rangle\geq d^2(Tx, Ty)$,
  \item[(iii)] \emph{monotone} if $\langle\overrightarrow{xy},\overrightarrow{Tx Ty}\rangle\geq 0$,
\end{itemize}
for all $x,y\in C$. It is clear that every firmly nonexpansive mapping is monotone. Also, it follows from Cauchy-Schwarz inequality that every firmly nonexpansive mapping is nonexpansive.
\begin{pro}\label{firmly nonex}
Let $C$ be a nonempty closed convex subset of a Hadamard space $X$. Then, the metric projection $P_C : X \rightarrow C\subseteq X$ is firmly nonexpansive and so it is monotone and nonexpansive.
\end{pro}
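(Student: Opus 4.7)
My plan is to deduce the firm nonexpansiveness of $P_C$ directly from the characterization in Theorem~\ref{proj}, by combining two applications of that theorem with a short additivity property of the quasilinearization form.

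Set $u=P_Cx$ and $v=P_Cy$; note both exist uniquely because $C$ is nonempty, closed, and convex in the Hadamard space $X$. Applying Theorem~\ref{proj} to $u=P_Cx$ with test point $v\in C$ gives $\langle\overrightarrow{xu},\overrightarrow{uv}\rangle\geq 0$, and applying it to $v=P_Cy$ with test point $u\in C$ gives $\langle\overrightarrow{yv},\overrightarrow{vu}\rangle\geq 0$. The goal is to show
\[
\langle\overrightarrow{xy},\overrightarrow{uv}\rangle\geq d^{2}(u,v),
\]
so I want to split $\overrightarrow{xy}$ into $\overrightarrow{xu}+\overrightarrow{uv}+\overrightarrow{vy}$ inside the quasilinear form.

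The key lemma I would record first (by one line of expansion from the definition in \eqref{qasilin}) is the additivity identity
\[
\langle\overrightarrow{ab},\overrightarrow{cd}\rangle+\langle\overrightarrow{be},\overrightarrow{cd}\rangle=\langle\overrightarrow{ae},\overrightarrow{cd}\rangle \qquad (a,b,c,d,e\in X),
\]
whose proof is a direct cancellation of the mixed $d^{2}(b,\cdot)$ terms. Together with the obvious antisymmetry $\langle\overrightarrow{ba},\overrightarrow{cd}\rangle=-\langle\overrightarrow{ab},\overrightarrow{cd}\rangle=\langle\overrightarrow{ab},\overrightarrow{dc}\rangle$, iterated application of this identity yields
\[
\langle\overrightarrow{xy},\overrightarrow{uv}\rangle
=\langle\overrightarrow{xu},\overrightarrow{uv}\rangle+\langle\overrightarrow{uv},\overrightarrow{uv}\rangle+\langle\overrightarrow{vy},\overrightarrow{uv}\rangle
=\langle\overrightarrow{xu},\overrightarrow{uv}\rangle+d^{2}(u,v)+\langle\overrightarrow{yv},\overrightarrow{vu}\rangle.
\]
Both outer terms are nonnegative by the characterization, so $\langle\overrightarrow{xy},\overrightarrow{uv}\rangle\geq d^{2}(u,v)$, which is exactly firm nonexpansiveness of $P_C$. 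Monotonicity is then immediate since $d^{2}(P_Cx,P_Cy)\geq 0$, and nonexpansiveness follows by combining firm nonexpansiveness with the Cauchy--Schwarz inequality~\eqref{Cha-Sch}: $d^{2}(P_Cx,P_Cy)\leq\langle\overrightarrow{xy},\overrightarrow{P_Cx\,P_Cy}\rangle\leq d(x,y)\,d(P_Cx,P_Cy)$.

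The only subtle step is the additivity identity, since the paper has not recorded it. It is however purely algebraic from the definition of quasilinearization, so the main effort is conceptual: noticing that the characterization in Theorem~\ref{proj} is precisely what makes $\overrightarrow{xu}$ and $\overrightarrow{vy}$ ``point away'' from $\overrightarrow{uv}$ from each side, and that the middle piece $\langle\overrightarrow{uv},\overrightarrow{uv}\rangle=d^{2}(u,v)$ is exactly what firm nonexpansiveness demands.
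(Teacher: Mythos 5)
Your proof is correct and follows essentially the same route as the paper: both apply Theorem~\ref{proj} twice (to $P_Cx$ with test point $P_Cy$ and vice versa) and use the same three-term decomposition $\langle\overrightarrow{xy},\overrightarrow{uv}\rangle=\langle\overrightarrow{xu},\overrightarrow{uv}\rangle+d^2(u,v)+\langle\overrightarrow{yv},\overrightarrow{vu}\rangle$. The only difference is that you explicitly record the additivity identity justifying that decomposition, which the paper uses without comment.
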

\begin{proof}
 Let $x,y\in X$. Since $P_Cx, P_Cy\in C$, it follows from Theorem \ref{proj} that
\begin{eqnarray}
\nonumber \langle\overrightarrow{xP_Cx},\overrightarrow{P_Cx P_Cy}\rangle\geq 0\ \ \  \ \mbox{and}\ \ \
\langle\overrightarrow{yP_Cy},\overrightarrow{P_Cy P_Cx}\rangle\geq0.
\end{eqnarray}
Therefore,
\begin{eqnarray}\label{P_C firmly nonexp}
\nonumber \langle\overrightarrow{xy},\overrightarrow{P_Cx P_Cy}\rangle&=&\langle\overrightarrow{xP_Cx},\overrightarrow{P_Cx P_Cy}\rangle+\langle\overrightarrow{P_Cx P_Cy},\overrightarrow{P_Cx P_Cy}\rangle+\langle\overrightarrow{yP_Cy},\overrightarrow{P_Cy P_Cx}\rangle\\
\nonumber &\geq&\langle\overrightarrow{P_Cx P_Cy},\overrightarrow{P_Cx P_Cy}\rangle\\
\nonumber &=&d^2(P_Cx, P_Cy),
\end{eqnarray}
which completes the proof.
\end{proof}

{\small

}
\end{document}